\documentclass{amsart}
 \usepackage{amssymb,amsmath,amsfonts,epsfig,latexsym}

 \newtheorem{theorem}{Theorem}

\newtheorem{proposition}[theorem]{Proposition}
\newtheorem{lemma}[theorem]{Lemma}

\theoremstyle{definition}
\newtheorem{remark}[theorem]{Remark}
\newtheorem{example}[theorem]{Example}

\def\Q{\ensuremath{\mathbb{Q}}}

\def\R{\ensuremath{\mathbb{R}}}

\def\cX{\ensuremath{\mathcal{X}}}
\def\cY{\ensuremath{\mathcal{Y}}}

\def\m{\ensuremath{\mathfrak{m}}}

\def\<{\ensuremath{\langle}}
\def\>{\ensuremath{\rangle}}

\DeclareMathOperator{\Spec}{Spec}

\DeclareMathOperator{\Trop}{Trop}
\DeclareMathOperator{\TTrop}{\mathfrak{Trop}}

\begin{document}

\title{Correction to ``Fibers of tropicalization"}

\author[Payne]{Sam Payne}
\address{Yale University, Department of Mathematics, 10 Hillhouse Ave, New Haven, CT 06511}
\email{sam.payne@yale.edu}

\begin{abstract}
This note explains an error in Proposition~5.1 of ``Fibers of tropicalization," Math. Z. 262 (2009), no. 2, 301--311, discovered by W.~Buczynska and F.~Sottile, and fills the resulting gap in the proof of the paper's main theorem.
\end{abstract}

\maketitle

Part (3) of Proposition~5.1 in \cite{tropicalfibers} claims that if $X$ is a subvariety of a torus $T$ containing the identity then there is a split surjection $\varphi: T \rightarrow T'$ such that the image of $X$ is a hypersurface and the intersection of the initial degeneration $X_0$ with the kernel of $\varphi_0$ is $\{1_T\}$.  This claim is false, and the following is a counterexample.  

\begin{example}
Suppose the characteristic of $K$ is not 2, and let $X$ be a curve in a three-dimensional torus containing all eight $2$-torsion points of $T$.  Then $X_0$ contains all eight $2$-torsion points of $T_0$ and, for any projection $\varphi$ from $T$ to a two dimensional torus, the kernel of $\varphi_0$ contains four $2$-torsion points, all of which are in $X_0$. 
\end{example}

\noindent The falsehood of part (3) of Proposition~5.1 leaves an essential gap in the proof of Theorem~4.1, which is the main result of \cite{tropicalfibers}.  This result has now been proved independently by different means, including nonarchimedean analysis \cite[Pro\-position~4.14]{Gubler12} and noetherian approximation \cite[Theorem~4.2.5]{tropicallifting}.  The original proposed method of proof using split surjections of tori to decrease the codimension may be of independent interest, but the error in Proposition~5.1 interferes with the reduction to the hypersurface case.  

Here, we complete the proof of Theorem~4.1 via the original method of split surjections of tori by projecting even further, onto a torus of dimension equal to $\dim X$, and using the going-down theorem for finite extensions of integral domains.

\begin{remark}
The second main result of \cite{tropicalfibers} is Corollary~4.2, which says that the fibers of the classical tropicalization map from $X(K)$ to $N_G$ are Zariski dense.  The error in Proposition~5.1 does not create a serious gap in the proof of this weaker result; the original arguments can be modified, as follows, to obtain Corollary~4.2 without deducing it from Theorem~4.1.  

Parts (1) and (2) of Proposition~5.1 reduce Corollary~4.2 to the hypersurface case, and Proposition~6.1 shows that if $X$ is a hypersurface then $\Trop^{-1}(v) \cap X(K)$ is infinite.  The argument in Section~6.3 then goes through with $\underline x$ and $\TTrop^{-1}(\underline x)$ replaced by $v$ and $\Trop^{-1}(v)$, respectively.
\end{remark}

To prove Theorem~4.1, we first consider the special case when $X$ is a torus. 

\begin{lemma}  \label{lem:torus}
For any $v \in N_G$ and $\underline x \in T_v(k)$, the fiber $\TTrop^{-1}(\underline x)$ is Zariski dense in $T$.
\end{lemma}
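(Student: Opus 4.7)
The idea is to use the group structure of $T$ to reduce to the case $v = 0$, $\underline x = 1_{T_0}$, and then to verify density of the resulting subgroup directly in coordinates. The key observation is that $\TTrop$ is equivariant for the translation action of $T(K)$ on itself and on the initial degenerations: if $\tilde x \in T(K)$ is any lift with $\TTrop(\tilde x) = \underline x$, then multiplication by $\tilde x$ carries $\TTrop^{-1}(1_{T_0})$ bijectively onto $\TTrop^{-1}(\underline x)$. Since this multiplication is an automorphism of the variety $T$, it preserves Zariski density, and the problem reduces to showing that $U := \TTrop^{-1}(1_{T_0})$ is Zariski dense in $T$.

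To produce such a lift $\tilde x$, choose an isomorphism $T \cong \Spec K[x_1^{\pm 1}, \dots, x_n^{\pm 1}]$, and write $v = (v_1, \dots, v_n)$ and $\underline x = (\bar a_1, \dots, \bar a_n) \in (k^\times)^n$. Because $v \in N_G$, each $v_i$ lies in the value group, so we may choose $c_i \in K^\times$ with $\mathrm{val}(c_i) = v_i$ and lifts $a_i \in \O^\times$ of $\bar a_i$; then $\tilde x = (c_1 a_1, \dots, c_n a_n)$ works. In these coordinates, the set $U$ becomes $(1 + \m)^n$, the principal units of $T(\O)$.

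It remains to show that $(1 + \m)^n$ is Zariski dense in $(K^\times)^n$. Since the valuation on $K$ is nontrivial, $\m$ is an infinite set, so $1 + \m$ is infinite as well. A standard inductive argument shows that whenever $S \subset K$ is infinite, no nonzero polynomial on $\A^n$ vanishes identically on $S^n$; equivalently, $S^n$ is Zariski dense in $\A^n$. Taking $S = 1 + \m$ and observing that every element of $(1 + \m)^n$ is a unit yields density in the open subvariety $T \subset \A^n$.

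I expect no serious obstacle here: the argument is essentially formal once one uses equivariance to shift by $\tilde x$. The only point where anything at all needs checking is the infinitude of $1 + \m$, and this is automatic from the standing hypothesis that $K$ carries a nontrivial valuation. The content of the lemma is really that, in the torus case, the fiber is a coset of a manifestly dense subgroup.
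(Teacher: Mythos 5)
Your proof is correct and takes essentially the same route as the paper: translate to reduce to $v = 0$ and $\underline x = 1_T$, identify the fiber in coordinates with $(1+\m)^n$, and conclude density from the infinitude of $1 + \m$. The only difference is that you make explicit the existence of the lift $\tilde x$ and the inductive argument that a product of infinite subsets of $K$ is Zariski dense, both of which the paper leaves implicit.
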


\begin{proof}
After translation, we may assume that $v = 0$ and $\underline x = 1_T$. If $T = K^*$ is one-dimensional then $\TTrop^{-1}(1_{T})$ is identified with the subset $1 + \m$ of the valuation ring $R$, which is infinite and hence Zariski dense.  For $T$ of arbitrary dimension $n$, choose an isomorphism from $T$ to  $(K^*)^n$.  Then $\Trop^{-1}(1_T)$ is identified with the product of dense sets $(1+\m)^n$, and is therefore dense.
\end{proof}

Next, we choose a suitable projection from $X$ to a torus of dimension equal to $\dim X$.  Say $X$ has dimension $d$.  Recall that the set of $v$ in $N_\R$ such that $X_v$ is nonempty is the underlying set of a finite polyhedral complex $\Delta$ of pure dimension $d$.  Projecting along a general rational subspace of codimension $d$ in $N_\Q$ maps each face of $\Delta$ isomorphically onto its image.  Such a projection corresponds to a split surjection of tori $\varphi: T \rightarrow T'$ with the property that, for each $v' \in N'_G$, the preimage
\[
\phi^{-1}(v') \cap \Trop(X)
\]
is finite, where $\phi: N_G \rightarrow N'_G$ is the linear map induced by $\varphi$.

Say $v_0, \ldots, v_s$ are the finitely many preimages in $\Delta$ of $\phi(v)$, where $v_0 = v$.  Each of the schemes $T_{v_0}, \ldots, T_{v_s}$ over $\Spec R$ contains the torus $T_K$ as an open subscheme, and the morphisms $\varphi_{v_i} : T_{v_i} \rightarrow T'_{\phi(v)}$ all agree on $T_K$.  Therefore, we can glue these schemes and morphisms to get
\[
T_{v_0} \cup \cdots \cup T_{v_s} \rightarrow T'_{\phi(v)}
\]
over $\Spec R$.  We write $\Phi$ for the restriction of this morphism to the closure of $X$ and prove Theorem~4.1 using the following technical result.

\begin{proposition} \label{prop:finite}
The morphism $\Phi:\cX_{v_0} \cup \cdots \cup \cX_{v_s} \rightarrow T'_{\phi(v)}$ is finite.
\end{proposition}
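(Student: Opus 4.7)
The plan is to verify that $\Phi$ is both proper and quasi-finite; since $\Phi$ is of finite type between Noetherian schemes, these two properties together imply that $\Phi$ is finite.

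For properness, I would apply the valuative criterion. Let $R'$ be a DVR extending $R$, with fraction field $K'$, and suppose we have morphisms $\Spec K' \to \cX_{v_0} \cup \cdots \cup \cX_{v_s}$ and $\Spec R' \to T'_{\phi(v)}$ forming a commutative square with $\Phi$. The upper map corresponds to a point $x \in X(K')$, and by commutativity $\varphi(x) \in T'(K')$ extends to an $R'$-valued point of $T'_{\phi(v)}$. Since $T'_{\phi(v)}$ is the standard model of $T'$ at $\phi(v)$, this extension is equivalent to $\Trop(\varphi(x)) = \phi(v)$. Hence $\Trop(x)$ lies in $\phi^{-1}(\phi(v)) \cap \Trop(X) = \{v_0, \ldots, v_s\}$; say $\Trop(x) = v_i$. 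Then $x$ extends uniquely to an $R'$-valued point of $\cX_{v_i}$, giving the required diagonal lift, and the uniqueness clause of the criterion follows because $\Trop(x)$ determines the index $i$ uniquely.

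For quasi-finiteness, I would show that every fiber of $\Phi$ is zero-dimensional. Suppose, for contradiction, that some fiber contains a positive-dimensional irreducible subvariety $Z$. This $Z$ lies either in the generic fiber $X_K$ or in the special fiber $X_{v_i}$ of some component $\cX_{v_i}$. By the Bieri--Groves theorem, $\Trop(Z)$ has dimension $\dim Z > 0$, so it is infinite. On the other hand, $\varphi$ is constant on $Z$, so $\Trop(Z)$ is contained in a single fiber of $\phi$. In the generic case, $\Trop(Z) \subseteq |\Delta|$, and in the special case, $\Trop(Z) \subseteq \Trop(X_{v_i})$, which is the star fan of $\Delta$ at $v_i$. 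Since the general rational projection $\phi$ maps each face of $\Delta$ isomorphically onto its image, it likewise maps each cone of the star fan at every $v_i$ isomorphically onto its image; hence $\phi$ is finite-to-one on both $|\Delta|$ and each $\Trop(X_{v_i})$. This contradicts the infinitude of $\Trop(Z)$.

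The main obstacle is the quasi-finiteness analysis on the special fiber: one must carefully identify $\Trop(X_{v_i})$ with the star fan of $\Delta$ at $v_i$ and confirm that the genericity condition on $\phi$ --- injectivity on each face of $\Delta$ --- propagates to injectivity on each cone of every such star fan. Both facts are standard in tropical geometry, but they require some careful polyhedral bookkeeping to invoke correctly in this setting.
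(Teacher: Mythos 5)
Your overall strategy --- properness plus finiteness of fibers --- is the same as the paper's, and your quasi-finiteness analysis is essentially the argument given there: on the generic fiber the tropicalization of $\Phi^{-1}(x')$ lands in $\phi^{-1}(\Trop(x')) \cap \Trop(X)$, and on the special fiber one uses that $\Trop(X_{v_i})$ is the star of $v_i$ and that a general projection remains injective on each cone of that star (this is exactly the ``finite by construction'' step in the paper). That half is fine.

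The properness half has a genuine gap. The base $R$ here is the valuation ring of a field whose value group $G$ is divisible, so $R$ is \emph{not} a DVR and is not noetherian, and --- fatally for your setup --- there are no DVRs extending $R$ at all: any valuation ring dominating $R$ has value group containing the divisible group $G$, hence not $\Z$. So the valuative criterion as you state it is vacuous; in the non-noetherian setting it must be run against arbitrary valuation rings, and even then it only applies after one has separately established that $\Phi$ is quasi-compact, separated, and of finite type, none of which you verify (the source is a gluing of the affine schemes $\cX_{v_0}, \ldots, \cX_{v_s}$ along $T_K$, so separatedness in particular needs an argument). For the same reason your concluding step ``proper $+$ quasi-finite $+$ finite type between Noetherian schemes $\Rightarrow$ finite'' does not apply as stated; the paper explicitly flags that $\cX_{v_i}$ and $T'_{\phi(v)}$ are not noetherian, and instead proves that $\Phi$ is of \emph{finite presentation} (via flatness of $\cX_\Sigma$ over $R$ and Raynaud--Gruson) so that EGA~IV~8.11.1 can be invoked. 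The paper sidesteps the valuative criterion entirely: it compactifies $\varphi$ to a morphism of toric schemes $\overline\Phi: \cX_\Sigma \to \cY_{\Sigma'}$, gets properness of $\overline\Phi$ from properness of $\cX_\Sigma$ over $\Spec R$ (which encodes the condition $\Trop(X) \subseteq |\Delta|$ that your valuative argument is implicitly trying to re-derive), and then observes that $\cX_{v_0} \cup \cdots \cup \cX_{v_s}$ is the full preimage of the open subscheme $T'_{\phi(v)}$, so $\Phi$ is proper by base change. To repair your proof you would need either to adopt that compactification, or to carry out the valuative criterion with general valuation rings together with the missing quasi-compactness, separatedness, and finite-presentation verifications.
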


\noindent Here $\cX_{v_i}$ denotes the closure of $X$ in $T_{v_i}$.  The schemes $\cX_{v_i}$ and $T'_{\phi(v)}$ over $\Spec R$ are not noetherian, but this does not create any additional difficulties.  We deduce Theorem~4.1 from Proposition~\ref{prop:finite} using the going-down theorem for finite extensions of an integrally closed domain, which has no noetherian hypothesis.

\begin{proof}[Proof of Theorem~4.1]
First, we show that $R[M']^{\phi(v)}$ is an integrally closed domain.  Translating by a point in $\Trop^{-1}(\phi(v))$ induces an isomorphism from $R[M']^{\phi(v)}$ to $R[M']$ which is a localization of a polynomial ring over the integrally closed domain $R$, and hence an integrally closed domain, by \cite[Proposition~17.B(2)]{Matsumura80} and \cite[Example~9.3]{Matsumura89}.

By  Lemma~\ref{lem:torus}, the points $x'$ in $\TTrop^{-1}(\varphi_v(\underline x)$ are dense in $T'$.  By the going-down theorem for finite extensions of an integrally closed domain \cite[Theorem~9.4(ii)]{Matsumura89}, for each such $x'$ there is a point $x \in X(K)$ specializing to $\underline x$ such that $\varphi(x) = x'$.    This shows that the image of $\TTrop^{-1}(\underline x) \cap X(K)$ is Zariski dense in $T'$.  Since $\varphi$ is finite, it follows that $\TTrop^{-1}(\underline x) \cap X(K)$ is Zariski dense in $X$, as required.
\end{proof}

It remains to prove Proposition~\ref{prop:finite}.  To do this, we work with $G$-admissible fans and the associated toric schemes over $\Spec R$, as in \cite{Gubler12}, to which we refer the reader for details of these constructions.  Choose a $G$-admissible fan structure $\Sigma'$ on $N'_\R \times \R_{\geq 0}$ that contains $\R_{\geq 0} \cdot (\phi(v), 1)$ as a 1-dimensional face.  Let $\Sigma$ be a $G$-admissible fan on $N_\R \times \R_{\geq 0}$ such that
\begin{enumerate}
\item For each face $\tau \in \Delta$, the cone $\R_{\geq 0} \cdot (\tau \times 1)$ is a union of faces of $\Sigma$.
\item For each face $\sigma \in \Sigma$, the image $(\phi \times 1)(\sigma)$ is contained in a face of $\Sigma'$.
\end{enumerate}
Let $\cY_\Sigma$ and $\cY_{\Sigma'}$ be the corresponding toric schemes over $\Spec R$, as defined in \cite[Section~7]{Gubler12}, containing $T_K$ and $T'_K$ as dense open subschemes, respectively, and let $\cX_\Sigma$ be the closure of $X$ in $\cY_\Sigma$.  Since $\Sigma'$ contains $\R_{\geq 0} \cdot (\phi(v), 1)$, the toric scheme $\cY_{\Sigma'}$ contains $T'_{\phi(v)}$ as an open subscheme.  Then, by (1) and (2), the cones $\R_{\geq 0} (v_i, 1)$ must be cones in $\Sigma$, for $i = 0, \ldots, s$, and hence $\cX_\Sigma$ contains $\cX_{v_0} \cup \cdots \cup \cX_{v_s}$ as an open subscheme as well.

The morphism $\varphi: T_K \rightarrow T'_K$ extends to a morphism of toric schemes from $\cY_\Sigma$ to $\cY_{\Sigma'}$, by \cite[11.9]{Gubler12}, and we write $\overline \Phi$ for the restriction of this morphism to $\cX_\Sigma$.  The morphism $\Phi$ in Proposition~\ref{prop:finite} is the restriction of $\overline \Phi$ to $\cX_{v_0} \cup \cdots \cup \cX_{v_s}$.

\begin{lemma} \label{lem:proper}
The morphism $\overline \Phi: \cX_\Sigma \rightarrow \cY_{\Sigma'}$ is proper and of finite presentation.
\end{lemma}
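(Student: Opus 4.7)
The plan is to treat properness and finite presentation separately, and in each case to reduce the problem to a standard property of toric schemes over $\Spec R$ via the combinatorial conditions (1) and (2) on the fans.

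For properness, rather than invoking the valuative criterion for $\overline\Phi$ directly, I would factor $\overline\Phi$ through the base. The key step is to verify that $\cX_\Sigma\to\Spec R$ is proper. Condition~(1) guarantees that $\R_{\geq 0}\cdot(\tau\times\{1\})\subseteq|\Sigma|$ for every face $\tau\in\Delta$, so the support of $\Sigma$ contains the entire cone over $\Trop(X)\times\{1\}$. This is the tropical completeness hypothesis under which the toric-scheme analogue of Tevelev's theorem gives properness of the closure $\cX_\Sigma$ over $\Spec R$, available from the toric compactification results in~\cite{Gubler12}. Meanwhile, $\cY_{\Sigma'}$ is separated over $\Spec R$ because any toric scheme assembled from a fan is separated (the usual argument via the dual cones of common faces works verbatim in Gubler's setting). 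Combining these with the standard lemma that if $g\circ f$ is proper and $g$ is separated then $f$ is proper---applied with $f=\overline\Phi$ and $g$ the structure morphism $\cY_{\Sigma'}\to\Spec R$---yields properness of $\overline\Phi$.

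For finite presentation, $G$-admissibility of $\Sigma$ and $\Sigma'$ means that each affine chart of $\cY_\Sigma$ or $\cY_{\Sigma'}$ has the form $\Spec R[S_\sigma]$ for a finitely generated monoid $S_\sigma$ (by Gordan's lemma applied to the rational polyhedral cone $\sigma$), so both toric schemes are locally of finite presentation over $\Spec R$. The closed subscheme $\cX_\Sigma\hookrightarrow\cY_\Sigma$ is cut out locally by finitely many equations and is therefore also locally of finite presentation over $\Spec R$. The general fact that a morphism whose composite with a locally-of-finite-type morphism is locally of finite presentation must itself be locally of finite presentation then shows $\overline\Phi$ is locally of finite presentation; quasi-compactness and quasi-separatedness follow from the fans being finite, completing finite presentation.

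The main obstacle is establishing properness of $\cX_\Sigma\to\Spec R$ in the non-noetherian setting. Over a field this is a classical tropical compactification theorem, but over $\Spec R$ one must either invoke the appropriate statement in~\cite{Gubler12} or verify the valuative criterion directly, taking care with valuation rings over $R$ whose valuations may or may not dominate that of $R$. Once this is established, separatedness of $\cY_{\Sigma'}$ and the finite-presentation bookkeeping are formal, and the factorization argument completes the proof.
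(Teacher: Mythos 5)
Your properness argument is essentially the paper's: condition (1) ensures the support of $\Sigma$ contains the cone over $\Trop(X) \times \{1\}$, so $\cX_\Sigma$ is proper over $\Spec R$ by the compactness criterion for tropical compactifications over valuation rings \cite[Proposition~11.12]{Gubler12}; combined with separatedness of $\cY_{\Sigma'}$ over $\Spec R$ \cite[Lemma~7.8]{Gubler12} and the cancellation property for proper morphisms, this gives properness of $\overline\Phi$. That half is fine, and you correctly identify that the non-noetherian properness statement must be quoted rather than rederived from the classical case.

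The finite-presentation half has a genuine gap. You assert that the closed subscheme $\cX_\Sigma \hookrightarrow \cY_\Sigma$ is ``cut out locally by finitely many equations,'' but this is precisely the nontrivial point, not something that comes for free. The valuation ring $R$ is non-noetherian (its maximal ideal $\m$ is not finitely generated), so a closed subscheme of a finitely presented $R$-scheme need not be finitely presented: on an affine chart the closure of $X$ is defined by the contraction to $R[S_\sigma]$ of the ideal of $X$ in $K[M]$, and there is no a priori reason for this contracted ideal to be finitely generated. Asserting it at this stage amounts to assuming the conclusion. The paper closes exactly this gap by a flatness argument: $\cX_\Sigma$ is by construction the closure of its generic fiber, hence $R$-torsion-free, hence flat over the valuation ring $R$; and a flat, finite-type scheme over an integral domain is automatically of finite presentation by Raynaud--Gruson \cite[Corollary~3.4.7]{RaynaudGruson71}. (Finite type over $\Spec R$ does follow from your Gordan's-lemma observation together with the closed immersion into $\cY_\Sigma$, so that part of your reduction is sound.) Once $\cX_\Sigma \rightarrow \Spec R$ is known to be of finite presentation, your final cancellation step---finite presentation of the composite plus $\cY_{\Sigma'} \rightarrow \Spec R$ locally of finite type implies $\overline\Phi$ is of finite presentation---goes through exactly as in the paper.
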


\begin{proof}
By \cite[Proposition~11.12]{Gubler12} and (1), the scheme $\cX_\Sigma$ is proper over $\Spec R$.  Since $\cY_{\Sigma'}$ is separated over $\Spec R$ \cite[Lemma~7.8]{Gubler12}, it follows that $\cX_\Sigma$ is proper over $\cY_{\Sigma'}$ \cite[Corollary~5.4.3(i)]{EGA2}.

Since $\Sigma$ is $G$-admissible and $G$ is divisible, the toric scheme $\cY_\Sigma$ is of finite presentation over $\Spec R$ \cite[Proposition~6.7]{Gubler12}, and in particular it is of finite type over $\Spec R$.  Therefore, $\cX_\Sigma$ is of finite type over $\Spec R$.  Since $\cX_\Sigma$ is the closure of its generic fiber, by construction, it is also flat, and hence of finite presentation over $\Spec R$ \cite[Corollary~3.4.7]{RaynaudGruson71}.  Hence $\cX_\Sigma$ is also of finite presentation over $\cY_\Sigma$ \cite[Proposition~1.6.2(v)]{EGA4.1}.
\end{proof}

\begin{proof}[Proof of Proposition~\ref{prop:finite}]
By \cite[Lemma~11.6]{Gubler12}, the union $\cX_{v_0} \cup \cdots \cup \cX_{v_s}$ is the full preimage of $T'_{\phi(v)}$ under $\overline \Phi$.  Therefore, by Lemma~\ref{lem:proper}, $\Phi$ is proper and of finite presentation.  To show that $\Phi$ is finite, it remains to show that it has finite fibers \cite[Theorem~8.11.1]{EGA4.3}.

We begin with the general fiber of $T'_{\phi(v)}$.  If $x' \in T'(K)$, then the tropicalization of the fiber $\Phi^{-1}(x')$ must be contained in $\phi^{-1}(\Trop(x')) \cap \Trop(X)$, which is finite by construction.  Therefore, the fiber must be zero-dimensional and hence finite, since $\Phi$ is of finite presentation.

It remains to check the special fiber of $T'_{\phi(v)}$.  Suppose $\underline x'$ is in $T'_{\phi(v)}(k)$.  The preimage of $T'_{\phi_v}(k)$ is the disjoint union $X_{v_0} \cup \cdots \cup X_{v_s}$.  Therefore, the tropicalization of $\Phi^{-1}(\underline x') \cap X_{v_i}$ must be contained in the fiber over $0$ under the natural map from the star of $v_i$ in $\Trop(X)$ to $N'_\R$, which is, again, finite by construction.  It follows that the fiber over $\underline x'$ must be zero-dimensional, and hence finite, as required.
\end{proof}

\begin{remark}
In addition to the error in Proposition~5.1, there is an unrelated sign error in Section~3 of \cite{tropicalfibers}, which appears in four places.  At the bottom of p.~305, the tilted group ring should be defined as
\[
R[M]^v = \bigoplus_{u \in M} \m^{-\<u,v\>}.
\]
The quotient $k[T_v]$ of this ring by the ideal generated by $\m$ is then $\bigoplus_{u \in M} k^{-\<u,v\>}$, the weight function on monomials is given by $bx^u \mapsto \nu(b) + \<u,v\>$, and the weight of the monomial $a_{u,i} x^u t^i$ in Example~3.3 is $i + \<u,v\>$.
\end{remark}

\noindent \textbf{Acknowledgments.}  I am most grateful to W.~Bu\-czynska and F. Sottile for their careful reading and for bringing these mistakes to my attention.  This work was supported in part by NSF DMS 1068689 and completed during a visit to the Max Planck Institute for Mathematics in Bonn, Germany.

\bibliographystyle{amsalpha}
\bibliography{math}

\end{document}